

\documentclass[10PT]{amsart}



\usepackage{latexsym}
\usepackage{amssymb}
\usepackage{amsmath}
\usepackage{amsfonts}
\usepackage[mathscr]{eucal}


\usepackage{amscd}


\usepackage{graphicx}
\usepackage{epsfig}


\usepackage{amsthm}
\usepackage[all,cmtip]{xy}
\usepackage{hyperref}
\usepackage{pst-grad} 
\usepackage{pst-plot} 
\usepackage{comment}



\newtheorem{thm}{Theorem}[section]

\newtheorem{lem}[thm]{Lemma}
\newtheorem{prop}[thm]{Proposition}


\newtheorem{doublesoulthm}[thm]{Double Soul Theorem}

\newtheorem*{main_thm}{Main Theorem}

\newtheorem*{thm*}{Theorem}


\theoremstyle{definition}

\theoremstyle{remark}

\newtheorem*{ack}{Acknowledgments}

\numberwithin{equation}{section}


\newcommand{\curv}{\mathrm{curv}}

\newcommand{\sphere}{\mathrm{\mathbb{S}}}

\newcommand{\Int}{\mathrm{\mathbb{Z}}}
\newcommand{\SO}{\mathrm{SO}}
\newcommand{\SU}{\mathrm{SU}}

\newcommand{\Hh}{\mathrm{H}}
\newcommand{\G}{\mathrm{G}}

\newcommand{\Id}{\mathrm{1}}
\newcommand{\Ss}{\mathrm{S}}

\newcommand{\Fix}{\mathrm{Fix}}



\begin{document}



\title[Nonnegatively curved fixed point homogeneous $5$-manifolds]{Nonnegatively curved fixed point homogeneous $5$-manifolds}

\author[F.\ Galaz-Garcia and W.\ Spindeler]{Fernando Galaz-Garcia and Wolfgang Spindeler$^*$}

\thanks{$^*$ Supported by SFB 878 - Groups, Geometry \& Actions}

\address{Mathematisches Institut, WWU M\"unster, Germany}
\email{f.galaz-garcia@uni-muenster.de}
\email{wolfgang.spindeler@uni-muenster.de}

\date{\today}


\subjclass[2000]{53C20}
\keywords{nonnegative curvature, circle action, $5$-manifold, fixed point homogeneous}


\begin{abstract}
Let $\G$ be a compact Lie group acting effectively by isometries on a compact Riemannian manifold $M$ with nonempty fixed point set $\Fix(M,\G)$. We say that the action is \emph{fixed point homogeneous} if $\G$ acts transitively on a normal sphere to some component of $\Fix(M,\G)$, equivalently, if $\Fix(M,\G)$ has codimension one in the orbit space of the action. We classify up to diffeomorphism closed, simply connected $5$-manifolds with nonnegative sectional curvature and an effective fixed point homogeneous isometric action of a compact Lie group. 
\end{abstract}

\maketitle



%
%

\section{Introduction and main result}


The classification of closed Riemannian manifolds with positive or nonnegative (sectional) curvature is a fundamental open problem in Riemannian geometry. In this context, the classification of these manifolds in the presence of a non-trivial symmetry group can be regarded as a first step towards more general classification results. One is led in this way to consider positively or nonnegatively curved closed Riemannian manifolds with an (effective) isometric action of a compact Lie group. ``Large'' actions, interpreted in different ways (cf. \cite{Gr2002,Wkg2007}), have received particular attention in the literature, e.g., \cite{Wa}, \cite{BB}, \cite{Wi_sym}, \cite{S},  \cite{Ve2002}, \cite{Ve2004}, \cite{GZ1},  \cite{GWZ}, \cite{GroveSearle1997}. 

Let $M$ be a smooth manifold with an (effective) smooth action of a compact Lie group $\G$. One possible measure for the size of the action $\G\times M \rightarrow M$ is its  \emph{cohomogeneity}, defined as the dimension of the orbit space $M/\G$. Under this interpretation, the largest actions will be those for which $\dim M/\G=0$, i.e., the action is transitive and $M$ is a homogeneous space. If the action has fixed points, $\dim M/\G$ is bounded below by the dimension of the fixed point set $\Fix(M,\G)$ and
\[
	\dim M/\G\geq \dim\Fix(M,\G) +1
\]
for any non-trivial action. In this case the {\it fixed point cohomogeneity} of the action, denoted by $\operatorname{cohomfix}(M,\G)$, is defined by
\[
\operatorname{cohomfix}(M,\G) = \dim M/\G - \dim\Fix(M,\G) -1\geq 0.
\]
For an action with fixed points, ``large'' may be interpreted as having low fixed point cohomogeneity. If the fixed point cohomogeneity of the action is $0$, we say that the action is \emph{fixed point homogeneous} and we say that $M$ is a {\it fixed point homogeneous manifold} (cf. \cite{GroveSearle1997}). Observe that the fixed point set of a fixed point homogeneous action has codimension $1$ in the orbit space.

 Grove and Searle \cite{GroveSearle1997} classified closed (Riemannian) manifolds with positive curvature and a fixed point homogeneous isometric Lie group action, along with the possible actions. In the simply connected case one has the following topological classification:

\begin{thm*}[Grove, Searle]\label{thm_pos_fph} Any closed, simply connected fixed point homogeneous manifold with positive curvature is diffeomorphic to a compact rank one symmetric space. 
\end{thm*}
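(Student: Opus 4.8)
Let $M=M^n$ be closed and simply connected with $\sec>0$, and let $\G$ act effectively, isometrically and fixed point homogeneously. First I would choose a component $F\subseteq\Fix(M,\G)$ of maximal dimension $k:=\dim F$ on whose normal sphere $\G$ acts transitively; then $F$ is a closed, totally geodesic, positively curved submanifold on which $\G$ acts trivially. Consider the $\G$-invariant distance function $f=d(\cdot,F)\colon M\to[0,r]$ with $r:=\max_M f$, and set $C:=f^{-1}(r)$. The goal is to establish a \emph{double disk bundle decomposition}
\[
M \;=\; D(\nu F)\,\cup_E\, D(\nu C),
\]
where $D(\nu F)$ and $D(\nu C)$ are the normal disk bundles of $F$ and of $C$, the common boundary $E\cong f^{-1}(t)$ (for any $0<t<r$) is simultaneously the normal sphere bundle of $F$ and of $C$, and --- crucially --- $\G$ acts on each of these sphere bundles fibrewise transitively.

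The analytic heart of the argument, and the step I expect to be hardest, is to show that $f$ has no critical points on $M\setminus(F\cup C)$ and that $C$ is a totally geodesic, $\G$-invariant submanifold whose normal spheres also carry a transitive $\G$-action. Regularity of $f$ on a punctured tube about $F$ is automatic from the slice theorem (there the orbits are exactly the homogeneous distance spheres of $F$), but propagating regularity out to distance $r$ needs a Toponogov/second-variation estimate: a critical point of $f$ at an intermediate distance, together with minimal geodesics joining it to $F$ and to $C$ and with the total geodesy of $F$, would force a comparison hinge incompatible with $\sec>0$; and a Berger-type maximum-distance argument in positive curvature then forces $C$ to be a smooth totally geodesic submanifold, to which any minimal geodesic from it to $F$ is perpendicular. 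Regularity gives, via critical point theory for distance functions, that $M\setminus C$ is the open normal tube of $F$ and $M\setminus F$ the open normal tube of $C$; moreover flowing the unit normal spheres of $F$ along $\nabla f$ onto those of $C$ is $\G$-equivariant, so the transitive action transports from one to the other. This is a ``double soul''-type statement, modelled on the soul construction.

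It remains to pin down the diffeomorphism type from the equivariant bundle data. Since $F$ and $C$ are disjoint compact totally geodesic submanifolds, Frankel's theorem gives $\dim F+\dim C\le n-1$, which tightly constrains the two normal bundles. Because $\G$ is transitive on the fibres of $\nu F$ and trivial on $F$, one has $E/\G\cong F$, so $\G$ acts on $C$ with orbit space $F$ and $M/\G\cong F\times[0,r]$. Feeding the fibre spheres $S^{n-\dim F-1}=\G/\Hh$ and $S^{n-\dim C-1}$ into the Montgomery--Samelson--Borel classification of compact Lie groups acting transitively on spheres, and then determining $F$, $C$ and hence $M$ by a case analysis (using an induction on dimension where convenient, after checking the low-dimensional cases directly), one finds that the only admissible configurations are the standard equivariant disk bundle decompositions of the compact rank one symmetric spaces: $S^n=D^n\cup_{S^{n-1}}D^n$, and $\CP^{n/2}$, $\mathbb{HP}^{n/4}$ and the Cayley plane written as the union of a normal disk and the normal disk bundle of a projective hyperplane, respectively line. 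Reassembling such a decomposition recovers the corresponding symmetric space up to diffeomorphism, and $\pi_1(M)=1$ excludes $\RP^n$; hence $M$ is diffeomorphic to $S^n$, $\CP^{n/2}$, $\mathbb{HP}^{n/4}$ or the Cayley plane, that is, to a compact rank one symmetric space (cf.\ \cite{GroveSearle1997}). I expect the bundle/dimension bookkeeping of this final stage to be the second main difficulty, after the curvature estimate above.
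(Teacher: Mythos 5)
First, a caveat: the paper does not prove this statement; it is quoted from \cite{GroveSearle1997}, so your sketch must be compared with Grove and Searle's original argument. Your skeleton --- decompose $M$ as two disk bundles over $F$ and over the maximal-distance set $C$ via critical point theory for $d(\cdot,F)$, then classify the resulting equivariant gluings --- is indeed their strategy. But your description of $C$ contains a genuine error that derails the final classification. In positive curvature the orbit space $X=M/\G$ is a positively curved Alexandrov space with boundary containing $F^*=F$, and $\operatorname{dist}(\partial X,\cdot)$ is \emph{strictly} concave; hence its maximum set is a \emph{single point} $p^*$, so $C=\G p$ is a single orbit (possibly a single fixed point), not a positive-dimensional totally geodesic submanifold on whose normal spheres $\G$ acts transitively. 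Consequently $C/\G$ is a point rather than $F$, and $M/\G$ is emphatically \emph{not} $F\times[0,r]$: that product (double-soul) rigidity is precisely what positive curvature forbids and what occurs only in the nonnegatively curved setting (cf.\ the Double Soul Theorem quoted in this paper). Frankel's theorem also cannot be applied to the pair $(F,C)$ as you propose, since an orbit need not be totally geodesic.

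The correct endgame is: $E=\partial D(F)=\partial D(\G p)$, and $\G$ acts transitively on the fibers of $E\to F$ with principal isotropy $\Hh$ (so the fiber is $\sphere^k=\G/\Hh$), whereas on the normal sphere $S^{\perp}_p$ of $\G p$ only the isotropy group $\G_p$ acts, in general not transitively; instead one obtains the identification $F\cong E/\G\cong S^{\perp}_p/\G_p$, which exhibits $F$ as a quotient of a round sphere by $\G_p$ and allows the classification of transitive sphere actions to be applied to the pair $(\G/\Hh,\ S^{\perp}_p/\G_p)$. It is this identification --- not a fibrewise transitive action on $\nu C$ --- that pins down $M$ as a compact rank one symmetric space. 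So the first half of your outline (regularity of the distance function away from $F\cup C$, soul-type concavity argument, disk bundle decomposition) is sound and matches \cite{GroveSearle1997}, but the structure you assign to $C$ and the ensuing bookkeeping would not go through as written.
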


This result has been used, for example, in the classification of simply connected, positively curved closed manifolds of cohomogeneity one (cf. \cite{S,GWZ,Ve2002,Ve2004}), where fixed point homogeneous actions arise naturally. 

The class of fixed point homogeneous manifolds with nonnegative curvature properly contains the collection of compact rank one symmetric spaces. Indeed, given a  nonnegatively curved manifold $M$ and a positively curved compact rank one symmetric space $N$ with an isometric fixed point homogeneous $\G$-action, one may construct a nonnegatively curved fixed point homogeneous $\G$-manifold by taking the product $M\times N$ equipped with the product metric and letting $\G$ act trivially on $M$ and fixed point homogeneously  on $N$. It follows, in particular, that any nonnegatively curved manifold may occur as a fixed point set component of a fixed point homogeneous action.

 As the paragraph above illustrates,  the general classification of closed fixed point homogeneous  manifolds with nonnegative curvature is a difficult problem. Nevertheless, in dimensions at most $5$, where fixed point set components have dimension at most $3$, a complete topological classification can be given in the simply connected case. In dimensions $4$ and below the topological classification was carried out in  \cite{Ga2}. In this paper we address the topological classification in dimension $5$. Our main result is the following theorem. 


\begin{main_thm}
\label{thm:main} Let $M^5$ be a closed, simply connected $5$-dimensional nonnegatively curved fixed point homogeneous $\G$-manifold. Then $\G$ is $\SO(5)$, $\SO(4)$, $\SU(2)$, $\SO(3)$ or $\Ss^1$ and we have the following classification. 
\\

\begin{itemize}
\item[\emph{(a)}] If $\G=\SO(5)$, $\SO(4)$ or $\SU(2)$, then $M^5$ is diffeomorphic to $\sphere^5$.
\bigskip
\item[\emph{(b)}] If $\G=\SO(3)$ or $\Ss^1$, then $M^5$ is diffeomorphic to $\sphere^5$ or to one of the two bundles over $\sphere^2$ with fiber $\sphere^3$.
\end{itemize}
 
\end{main_thm}

Observe that the list of fixed point homogeneous $5$-manifolds in the Main Theorem contains every known closed, simply connected $5$-manifold of nonnegative sectional curvature except for the Wu manifold $\SU(3)/\SO(3)$.


We prove the Main Theorem in Section~\ref{S:Proof_Main_Thm}, after recalling  some basic facts about group actions and Alexandrov spaces in Section~\ref{S:Prelim}. To prove the theorem we proceed as follows. When $\G$ is one of $\SO(5)$, $\SO(4)$, $\SU(2)$ or $\SO(3)$ the result follows easily from classification results in the literature. When $\G$ is $\Ss^1$ there are no general topological classification results for closed simply connected $5$-manifolds with a smooth circle action (see \cite{Ko2006}, though). In our case, the hypothesis of nonnegative curvature allows us to show, by looking at the orbit space structure, that $M^5$ decomposes as a union of two disc bundles over smooth submanifolds of $M^5$, one of which is a $3$-dimensional component of $\Fix(M^5,\Ss^1)$.  This in turn allows us to show, after some work, that  $H_2(M^5, \mathbb{Z})$ is either $0$ or $\mathbb{Z}$, whence the conclusion follows from the Barden-Smale classification of smooth, closed simply connected $5$-manifolds \cite{Ba1965,Sm1962}.



\begin{ack}The authors would like to thank Burkhard Wilking for helpful conversations. The second named author would like to thank Karsten Grove for numerous conversations on fixed point homogeneous manifolds.
\end{ack}

%
%

\section{Preliminaries}
\label{S:Prelim}

In this section we will recall some basic facts and fix  notation. We will consider all manifolds to be smooth and closed, i.e., compact and without boundary, and all actions will be assumed to be smooth and effective, unless stated otherwise. We will write $M\cong N$ to denote that two manifolds, $M$ and $N$, are diffeomorphic. Similarly, when two groups, $G$ and $H$, are isomorphic, we will write $G\cong H$.


Let $\G$ be a Lie group acting (on the left) on a smooth manifold $M$.  We denote by $\G_x$ the isotropy group at $x\in M$ and by $\G x$ the orbit of $x$. 
We will denote the fixed point set of the action by  
$\Fix(M , \G )$ and define its dimension as 
$$
	\dim(\Fix(M,\G))=\max \{\,\dim(N): \text{$N$ is a component of $\Fix(M,\G)$}\,\}.
$$
Given a subset $A\subset M$, we will denote its image in $M/\G$ under the orbit projection map $\pi:M\rightarrow M/\G$ by $A^*$. Following this notation, we will denote the orbit space of the action $\G\times M\rightarrow M$ by $M^*$, and a point in $M^*$ will be denoted by $p^*$, coressponding to the orbit $\G p$. We recall the well-known fact that the orbit space of a Lie group action on a simply connected manifold is simply connected (cf. \cite{Br1972}, Ch. 4).


Recall that a finite dimensional length space $(X,\mathrm{dist})$ is an \emph{Alexandrov space} if it has curvature bounded from below (cf. \cite{BBI}).
 When $M$ is a complete, connected Riemannian manifold and $\G$ is a compact Lie group acting on $M$ by isometries, the orbit space $M^*$ is equipped with the orbital distance metric induced from $M$, i.e., the distance between $p^*$ and $q^*$ in $M^*$ is the distance between the orbits $\G p$ and $\G q$ as subsets of $M$.  If, in addition, $M$ has sectional curvature bounded below, that is, $\sec M\geq k$, then the orbit space $M^*$ is an Alexandrov space with $\curv M^* \geq k$. 
 
The \emph{space of directions} of a general Alexandrov space at a point $x$ is
by definition the completion of the 
space of geodesic directions at $x$. In the case of orbit spaces $M^*=M/\G$, the space of directions $\Sigma_{p^*}M^*$ at a point $p^*\in M^*$ consists of geodesic directions and is isometric to
\[
S^{\perp}_p/\G_p,
\] 
where $S^{\perp}_p$ is the unit normal sphere to the orbit $\G p$ at $p\in M$.

When $M$ is a nonnegatively curved, fixed point homogeneous, Riemannian $\G$-manifold, the orbit space $M^*$ is a nonnegatively curved Alexandrov space and $\partial M^*$ contains a component $F$ of $\Fix(M,\G)$. Let $C^*\subset M^*$ denote the set at maximal distance from $F^*\subset \partial M^*$ and let $C=\pi^{-1}(C^*)$. It follows from the Soul Theorem for orbit spaces (cf. \cite{Gr2002, GroveSearle1997})  that $M$ can be written as the union of neighborhoods $D(F)$ and $D(C)$ along their common boundary $E$, i.e., 
\[
M=D(F)\cup_E D(C).
\]
In particular, when $C$ is another fixed point set component with maximal dimension, the Double Soul theorem in  \cite{SY} easily generalizes to the following.


\begin{doublesoulthm}[Searle, Yang]
\label{thm:double_soul_thm}
 Let $M$ be a nonnegatively curved fixed-point homogeneous Riemannian $\G$-manifold with principal isotropy $\Hh$. If $\mathrm{Fix}(M,\G)$ contains at least two components $F,N$ with maximal dimension, one of which is
compact, then $F$ and $N$ are isometric and  $M$ is diffeomorphic to an $\sphere^{k+1}$-bundle over $F$, where $\sphere^k=\G/\Hh$.

\end{doublesoulthm}

%
%

\section{Proof of the Main Theorem}
\label{S:Proof_Main_Thm}

 We have divided the proof  into four parts, corresponding to the cohomogeneity of the action. 
 
 Let $M^5$ be a simply connected nonnegatively curved fixed point homogeneous $5$-manifold. Let $\sphere^k$ be a normal sphere to a component of $\Fix(M^5,\G)$ with maximal dimension. We have  $1\leq k\leq 4$ and the cohomogeneity of the action is $5-k$. It suffices to consider (cf. \cite{GroveSearle1997}) the following pairs $(\G, \Hh)$ of Lie groups $\G$ acting transitively with principal isotropy $\Hh$ on $\sphere^k$, $1\leq k\leq 4$. 
\\

\begin{equation*} 
\label{eq:fx_pt_groups}
(\G, \Hh) = \left\{ 
\begin{array}{ll} 
(\SO(5),\SO(4)), & \text{if } k =4;\\[.2cm]
(\SU(2),\SU(1))\text{ or }(\SO(4),\SO(3)), & \text{if } k = 3;\\[.2cm] 
(\SO(3),\SO(2)), & \text{if } k=2;\\[.2cm]
(\Ss^1,\mathsf{1}), & \text{if } k=1.
\end{array} \right. 
\end{equation*}
\bigskip


\subsection{Cohomogeneity $1$}



By work of Hoelscher \cite{Ho2007}, a simply connected $5$-manifold with a cohomogeneity one action must be diffeomorphic to $\sphere^5$, the Wu manifold $\SU(3)/\SO(3)$ or to one of the two $\sphere^3$-bundles over $\sphere^2$. Each manifold in this list admits a metric with nonnegative curvature invariant under the cohomogeneity one action. In the fixed point homogeneous case it was shown in \cite{GroveSearle1997} that $M^5$ is equivariantly diffeomorphic to $\sphere^5$ equipped with a linear action. We point out that, more generally and without any curvature assumptions, a closed, simply connected manifold with a fixed point homogeneous action of cohomogeneity one must be equivariantly diffeomorphic to a compact rank one symmetric space equipped with one of the isometric actions described in \cite{Ho2007} (see also \cite{GroveSearle1997}). 





\subsection{Cohomogeneity $2$}


We will show, more generally, that a simply connected, fixed point homogeneous $n$-manifold $M^n$ of cohomogeneity two is diffeomorphic to $\sphere^n$. This will be a consequence of the following result  (cf. Bredon \cite{Br1972} Chapter 4). Here we denote the set of singular orbits by $B$, the set of exceptional orbits by $E$, and  the set of special exceptional orbits by $SE$. 


\begin{lem}
\label{L:cohom2_Bredon} Let $M$ be a smooth compact manifold with a smooth action of a compact connected Lie group. 
\begin{itemize}
	\item[\emph{(a)}] If $H_1(M, \Int_2)=0$ and a principal orbit is connected, then $SE=\emptyset$.\\
	\item[\emph{(b)}] If the action has cohomogeneity two, then the orbit space $M^*$ is a $2$-manifold and $\partial M^*=B^*\cup \overline{SE}^*$.\\
	\item[\emph{(c)}] If the action has cohomogeneity two, $H_1(M, \Int)=0$ and there are singular orbits, then $E^*=\emptyset$ and $M^*$ is a $2$-disc with boundary $B^*$.
\end{itemize}
\end{lem}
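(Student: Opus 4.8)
This is a classical structural result about cohomogeneity-two actions, and the plan is to assemble it from standard facts in Bredon's treatment of compact transformation groups (\cite{Br1972}, Ch.~IV, especially the analysis of the orbit structure of actions with orbits of codimension $\le 2$). I will treat the three parts in the order listed, since (b) feeds into (c).

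For part (a), the key input is the exact homology sequence relating $M$, the union $SE$ of special exceptional orbits, and the double branched cover arising along $SE$. Recall that a special exceptional orbit is an exceptional orbit whose orbit-space point lies in the ``boundary'' locus because its slice representation has a fixed-point set of codimension one with a $\mathbb{Z}_2$ acting as the antipodal map; near such an orbit $M$ looks like a twisted $I$-bundle and there is an associated free $\mathbb{Z}_2$-action on a double cover $\widetilde M$ of $M$ away from $B$. First I would recall that if $SE \neq \emptyset$ then there is a nontrivial homomorphism $\pi_1(M \setminus (B \cup SE)) \to \mathbb{Z}_2$ detecting this cover, hence a nontrivial class in $H^1(M\setminus B;\mathbb{Z}_2)$; using that $B$ has codimension $\ge 2$ (so it does not affect $H_1$ with $\mathbb{Z}_2$ coefficients) together with the assumption $H_1(M;\mathbb{Z}_2)=0$, and the hypothesis that the principal orbit is connected (so that the relevant covering is genuinely detected on $M$ rather than inside a single orbit), one derives a contradiction. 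I would cite Bredon directly here rather than reprove it; the connectedness of the principal orbit is exactly what is needed to pass from the local picture to a global $\mathbb{Z}_2$-cohomology obstruction.

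For part (b), the statement that $M^*$ is a topological $2$-manifold with boundary is the cohomogeneity-two case of the general fact that the orbit space of a cohomogeneity-$\le 2$ action is a manifold (with boundary): at a principal point the slice is trivial so $M^*$ is locally $\mathbb{R}^2$; at a singular or exceptional point $p^*$ the space of directions $\Sigma_{p^*}M^* = S^\perp_p/\G_p$ is a closed $1$-dimensional Alexandrov space, hence either a circle (interior point) or an arc (boundary point). Identifying which orbits produce boundary points is the content of $\partial M^* = B^* \cup \overline{SE}^*$: the boundary arises precisely where the linear slice action on the normal sphere $S^\perp_p$ has orbit space an arc, which happens exactly at singular orbits and at special exceptional orbits (and their limits). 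I would again invoke Bredon for the precise bookkeeping, noting that exceptional (non-special) orbits give interior points since the relevant quotient of a circle by a finite cyclic rotation group is again a circle.

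For part (c), combine (a) and (b): since $H_1(M;\mathbb{Z})=0$ forces $H_1(M;\mathbb{Z}_2)=0$, and principal orbits of a connected group are connected, part (a) gives $SE=\emptyset$, so by (b) $\partial M^* = B^*$. It remains to show $E^* = \emptyset$ and that $M^*$ is a disc. For $E^*=\emptyset$: an exceptional orbit without special exceptional or boundary phenomena again yields a nontrivial finite cyclic cover of a punctured neighborhood, producing torsion or a nontrivial class in $H_1(M;\mathbb{Z})$ (via a Gysin/transfer argument on the branched-cover structure along the codimension-$\ge 2$ exceptional set), contradicting $H_1(M;\mathbb{Z})=0$; this is Bredon's argument and I would cite it. Finally, $M^*$ is a compact surface with nonempty boundary (nonempty because there are singular orbits, and $B^* \subset \partial M^*$) and with $\pi_1(M^*) = 1$ since $M$ is simply connected (the orbit space of a connected group action on a simply connected manifold is simply connected, as recalled in Section~\ref{S:Prelim}); a simply connected compact surface with nonempty boundary is a disc, so $M^*$ is a $2$-disc with $\partial M^* = B^*$.

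\textbf{Main obstacle.} The substantive point is part (a), i.e., correctly setting up the $\mathbb{Z}_2$-cohomology obstruction that rules out special exceptional orbits, together with the parallel cyclic argument for $E^*$ in part (c): one must be careful that the ``branched cover'' picture along $B$ is harmless because $\mathrm{codim}\, B \ge 2$, while the cover along $SE$ (codimension one in $M^*$ but the branching happens at the boundary) is exactly what $H_1(M;\mathbb{Z}_2)=0$ kills. Since all of this is in Bredon, the honest plan is to state the result as a consequence of that reference and spell out only the orbit-space topology in (b)–(c).
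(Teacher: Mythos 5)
Your proposal takes essentially the same route as the paper: the paper offers no proof of this lemma at all, stating it as a citation to Bredon \cite{Br1972}, Ch.~4, and your plan likewise defers the substantive points (the $\Int_2$-obstruction ruling out $SE$, and the cyclic-cover argument ruling out $E^*$) to that reference while filling in the orbit-space topology. One small slip in part (c): you conclude that $M^*$ is a disc by asserting $\pi_1(M^*)=1$ ``since $M$ is simply connected,'' but the lemma only assumes $H_1(M,\Int)=0$, not simple connectivity of $M$. The fix is immediate: since the group is connected, orbits are connected and the orbit map has the path-lifting property, so $\pi_1(M)\to\pi_1(M^*)$ is surjective, hence $H_1(M^*,\Int)=0$; a compact surface with nonempty boundary and trivial first homology is a disc.
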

\bigskip

It follows from Lemma~\ref{L:cohom2_Bredon} that the orbit space $M^*$ is a $2$-disc whose boundary consists of fixed points and its interior consists of principal orbits. It then follows from \cite{GroveSearle1997} that $M^n$ is diffeomorphic to $\sphere^n$. Thus we have the following proposition. 

\begin{prop}[Cohomogeneity two classification] A closed, simply connected $n$-manifold with a fixed point homogeneous action of cohomogeneity two is diffeomorphic to  $\sphere^n$. 
\end{prop}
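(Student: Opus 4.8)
The plan is to combine the structural facts provided by Lemma~\ref{L:cohom2_Bredon} with the Soul Theorem for orbit spaces and the classification of fixed point homogeneous actions in \cite{GroveSearle1997}. First I would observe that since $M^n$ is simply connected, we have $H_1(M,\Int) = 0$ (and hence $H_1(M,\Int_2) = 0$), and since $\G$ is a compact connected Lie group acting fixed point homogeneously, it acts with a fixed point, so the action is non-trivial and has a singular orbit (indeed the fixed point set itself is a singular stratum). Moreover a principal orbit of a connected group is connected. Thus all hypotheses of Lemma~\ref{L:cohom2_Bredon}(a) and (c) are met: $SE = \emptyset$, $E^* = \emptyset$, and $M^*$ is a $2$-disc with boundary $B^* = \partial M^*$ consisting entirely of singular orbits.

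Next I would pin down which singular orbits appear on $\partial M^*$. Because the action is fixed point homogeneous, a component $F$ of $\Fix(M^n,\G)$ has codimension one in $M^*$, i.e. $F^* \subset \partial M^*$ is an arc or circle of fixed points. Since $\partial M^*$ is a circle (the boundary of the $2$-disc) and the fixed-point component is closed, one needs to rule out the presence of other singular orbits on the boundary and, more importantly, identify the interior. I would argue that the set $C^*$ at maximal distance from $F^*$ in the disc, together with the Soul Theorem for orbit spaces (as invoked in Section~\ref{S:Prelim}), forces $C^*$ to be a single point at the center; combined with $E^* = \emptyset$ this shows the interior of the disc consists entirely of principal orbits and $\partial M^* = F^*$ is a single circle of fixed points. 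Hence the orbit space is a $2$-disc whose boundary is all fixed points and whose interior is all principal, which is precisely the configuration analyzed by Grove and Searle.

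Finally, I would quote the relevant result from \cite{GroveSearle1997}: a fixed point homogeneous action whose orbit space is a disc with boundary consisting of fixed points and interior consisting of principal orbits forces $M^n$ to be equivariantly diffeomorphic to a sphere $\sphere^n$ with a linear action (this is the mechanism behind their positive-curvature classification, but the conclusion in this orbit-space configuration does not use the curvature bound — it is the codimension-one fixed-point-set analysis). The disc decomposition $M = D(F) \cup_E D(C)$ from the Soul Theorem realizes $M^n$ as the union of a tubular neighborhood of $F^{n-2}$ and a tubular neighborhood of the single exceptional/principal orbit $C \cong \G/\Hh \cong \sphere^{k}$ glued along their common boundary; tracing through the gluing identifies this with the standard genus-zero handle decomposition of $\sphere^n$.

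The main obstacle I anticipate is the step asserting that the interior of the $2$-disc is \emph{entirely} principal and the boundary is \emph{exactly} one fixed-point component — a priori $\partial M^*$, being a circle, could contain several arcs of differing isotropy types meeting at corner orbits with larger isotropy, and one must show the fixed-point component fills the whole boundary circle. This is where the fixed point homogeneity (codimension-one fixed set) together with connectedness of $\partial M^* = \sphere^1$ and the soul-theorem structure must be used decisively; invoking the precise statement from \cite{GroveSearle1997} streamlines this, but writing it out carefully is the delicate part of the argument.
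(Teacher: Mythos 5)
Your proposal is correct and follows essentially the same route as the paper: apply Lemma~\ref{L:cohom2_Bredon} to see that $M^*$ is a $2$-disc with no exceptional or special exceptional orbits, identify the boundary with the fixed point set, and quote \cite{GroveSearle1997} for the conclusion. The one point you flag as delicate closes immediately: in cohomogeneity two a maximal fixed point component $F$ is a closed $1$-manifold, hence a circle, and $F^*=F$ sits inside the boundary circle of the disc, so $F^*=\partial M^*$ and the interior is all principal by parts (b) and (c) of the lemma.
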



\subsection{Cohomogeneity $3$}


Since $M^5$ is simply connected, it follows from Lemma~\ref{L:cohom2_Bredon} that the only orbit types are fixed points and principal orbits. In this case the orbit space $M^*$ is  a simply connected $3$-manifold with a boundary component $F^2$, corresponding to a fixed point set component with maximal dimension. Suppose first that there are no other fixed point set components. Observe that, by Perelman's resolution of the Poincar\'e Conjecture \cite{Pe2002,Pe2003}, $F^2$ is diffeomorphic to $\sphere^2$. It follows from \cite{GroveSearle1997} that $M^5$ is diffeomorphic to $\sphere^n$. If there is a second $3$-dimensional fixed point set component, it follows from the Double Soul Theorem~\ref{thm:double_soul_thm} that  $M^5$ is diffeomorphic to one of the two $\sphere^3$ bundles over $\sphere^2$.


\subsection{Cohomogeneity $4$}


In this case $(\G,\Hh)=(\Ss^1, \Id)$. The orbit space $M^*$ is a $4$-dimensional, simply connected nonnegatively curved Alexandrov space. Fixed point set components are $3$- or $1$-dimensional totally geodesic closed submanifolds of $M^5$ and, by Lemma~\ref{L:cohom2_Bredon}, there are no special exceptional orbits. There may be exceptional orbits with finite isotropy $\Int_k$ for some $k\geq 2$, and each connected component $A$ of $\Fix(M,\Int_k)$ is a totally geodesic submanifold of $M^5$ of even codimension. Hence the projection $A^*$ of $A$ in the orbit space $M^*$ is either $2$-dimensional or an isolated point. We will say that a neighborhood $U^*$ of an isolated point $p^*\in M^*$ with non-trivial isotropy is a \emph{regular neighborhood} if $U\setminus\{\,p\,\}$ consists only of principal orbits. 
\\

Let $F^*$ be a component of maximal dimension of $\Fix(M,\Ss^1)$, and hence a component of $\partial M^*$.  We let $C^*$ be the set at maximal distance from $F^*$ in the orbit space $M^*$. We have $0\leq \dim C^*\leq 3$. Recall that, by the Soul Theorem for orbit spaces, $\operatorname{dist}(F^*,\cdot)$ is concave and $M^*$ deformation retracts onto $C^*$. Moreover, all the points in $M^*\setminus \{F^*\cup C^*\}$ correspond to principal orbits (cf. \cite{GroveSearle1994,GroveSearle1997}). 
\\


The following result from \cite{GaGaSe2010} will simplify our analysis when $C^*$ has dimension $1$ or $3$.


\begin{lem}
\label{L:FPH_SC} Let $M^n$ be a closed, simply connected, nonnegatively curved manifold of dimension $n\geq 4$ with an isometric $\Ss^1$-action and suppose that $\Fix(M^n, \Ss^1)$ contains an $(n-2)$-dimensional component $F$.  Let $C^*$ be the set at maximal distance from $F^*$ in the orbit space $M^*$.
\begin{itemize}
	\item[\emph{(a)}] If $\dim C^*=n-2$,  then $C$ is fixed by the $\Ss^1$ action, $C^* = C$ is isometric to $F^* = F$ and is simply connected.\\
	\item[\emph{(b)}] If $\dim C^*\leq n-4$, then $F^*$ is simply connected.\\
\end{itemize}
\end{lem}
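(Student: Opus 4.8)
The plan is to analyze the decomposition $M=D(F)\cup_E D(C)$ furnished by the Soul Theorem for orbit spaces, where $D(F)$ is a tubular neighborhood of $F$, $D(C)$ is a tubular neighborhood of $C=\pi^{-1}(C^*)$, and $E=\partial D(F)=\partial D(C)$ is their common boundary. Since the action is fixed point homogeneous with trivial principal isotropy, $F$ has codimension two in $M$ and $\Ss^1$ rotates its normal circle fixing only the center; hence $F^*=F$ (so $F^*$ is simply connected precisely when $F$ is), the piece $D(F)$ is the normal $\disc^2$-bundle of $F$ with $\Ss^1$ rotating the fibers, and $E$ is the associated normal $\sphere^1$-bundle of $F$. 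In particular $E$ is connected, being a circle bundle over the connected manifold $F$, and $\Ss^1$ acts freely on it with $E/\Ss^1=F$. I will use repeatedly that $M^*$ is simply connected, being the quotient of the simply connected $M$ by a connected group, and that $H_{n-1}(M;\Int_2)\cong H^1(M;\Int_2)=0$ by Poincar\'e duality, $M$ being closed and simply connected. One point to pin down first is that $C$ is a closed totally geodesic submanifold of $M$ with $D(C)$ its normal disk bundle; this follows from the structure of the soul together with the absence of special exceptional orbits, which holds by Lemma~\ref{L:cohom2_Bredon}(a) since $H_1(M;\Int_2)=0$.

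For part (b), I would observe that, $\Ss^1$-orbits being at most one-dimensional, $\dim C\le \dim C^*+1\le n-3$, so $C$ has codimension at least three in $M$. Deleting a subset of codimension at least three from a manifold does not change the fundamental group, and $D(C)\setminus C$ deformation retracts onto $E$; hence the inclusion $E\hookrightarrow D(C)$ induces an isomorphism $\pi_1(E)\cong\pi_1(D(C))\cong\pi_1(C)$. Applying the Seifert--van Kampen theorem to $M=D(F)\cup_E D(C)$ then gives $\pi_1(M)\cong\pi_1(F)\ast_{\pi_1(E)}\pi_1(C)$, and since the amalgamating homomorphism $\pi_1(E)\to\pi_1(C)$ is an isomorphism this pushout collapses to $\pi_1(F)$. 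Therefore $\pi_1(F)\cong\pi_1(M)=1$, and, as $F^*=F$, the set $F^*$ is simply connected.

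For part (a), assume $\dim C^*=n-2$. Since $\dim C^*\le\dim C\le\dim C^*+1$, a short dimension count (the non-fixed locus of $C$ is open, and an $(n-2)$-dimensional fixed component sitting inside the connected $(n-2)$-manifold $C$ must be all of $C$) shows that either $C$ is $\Ss^1$-fixed, with $\dim C=n-2$, or $C$ is a closed connected hypersurface, with $\dim C=n-1$. I would rule out the hypersurface case as follows: then $E=\partial D(C)$ is the normal $\sphere^0$-bundle of $C$, which, being connected, forces the normal line bundle of $C$ --- equivalently, since $M$ is orientable, the tangent bundle of $C$ --- to be non-orientable; but $H_{n-1}(M;\Int_2)=0$ forces the class of $C$ in $H_{n-1}(M;\Int_2)$ to vanish, so $C$ separates $M$, hence is two-sided, hence orientable, a contradiction. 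Thus $C$ is $\Ss^1$-fixed and $C^*=C$; being a closed connected submanifold of dimension $n-2=\dim\Fix(M,\Ss^1)$, it is a component of $\Fix(M,\Ss^1)$ of maximal dimension. Now $\Fix(M,\Ss^1)$ contains the two compact maximal-dimensional components $F$ and $C$, so the Double Soul Theorem~\ref{thm:double_soul_thm} applies and shows that $F$ and $C$ are isometric and that $M$ is diffeomorphic to an $\sphere^2$-bundle over $F$ (here $\sphere^k=\Ss^1/\Hh=\Ss^1$). The homotopy sequence of this bundle gives $\pi_1(F)\cong\pi_1(M)=1$. Combined with $F^*=F$ and $C^*=C$, this establishes all the assertions of (a).

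I expect the main obstacle to be the exclusion of the hypersurface case $\dim C=n-1$ in part (a): this is the one step where the global topology of $M$ must be invoked --- through the vanishing of $H_{n-1}(M;\Int_2)$ and the connectedness of $E$ --- in order to prevent the soul from being a locus of non-fixed orbits; once this is settled, the Double Soul Theorem disposes of part (a) with essentially no further work. The other point requiring care, used implicitly throughout, is the preliminary identification of $C=\pi^{-1}(C^*)$ as a genuine totally geodesic submanifold with $D(C)$ its normal disk bundle --- an assertion about the orbit-space geometry resting on the Soul Theorem for orbit spaces and on the absence of special exceptional orbits.
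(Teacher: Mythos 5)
A preliminary remark: the paper itself does not prove this lemma --- it is imported from \cite{GaGaSe2010} --- so there is no in-paper proof to compare yours against, and I am judging the proposal on its own merits. Your part (b) is correct and is the expected argument: $M^*\setminus C^*$ deformation retracts onto $F^*$ along the flow of the concave function $\operatorname{dist}(F^*,\cdot)$, so $M\setminus C$ has the homotopy type of $F$, and excising the compact set $C$ of dimension at most $n-3$ does not change $\pi_1(M)=1$; whether one phrases this via van Kampen or as a direct excision is immaterial (the excision tacitly uses that $C$ is tame enough for general position, which is standard here).

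Part (a), however, has a genuine gap, and it sits one step earlier than where you locate the difficulty. Your argument rests on the dichotomy ``either $C$ is entirely fixed, or $C$ is a smooth closed hypersurface with unit normal $\sphere^0$-bundle $E$.'' The parenthetical dimension count only excludes a proper $(n-2)$-dimensional fixed component inside $C$; it does not address the possibility that $C^*$ contains fixed points belonging to lower-dimensional components of $\Fix(M,\Ss^1)$, or non-principal (exceptional) orbits, interspersed with principal orbits. In that situation $C=\pi^{-1}(C^*)$ is an $(n-1)$-dimensional set that is singular over the non-principal orbits: it is not a closed hypersurface, $E\to C$ need not be a two-fold covering, and neither horn of your dichotomy applies as written. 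The same unestablished structural claim underlies your preliminary assertion that $D(C)$ is the normal disk bundle of a closed totally geodesic submanifold $C$ --- note that the paper's own Proposition in the cohomogeneity-four case is devoted precisely to showing that the preimage of the soul can be replaced by a genuine submanifold, and it takes real work with slices and spaces of directions. To close the gap one must show that a codimension-one soul $C^*$ is forced into $\partial M^*$, e.g.\ by analyzing $\Sigma_{x^*}M^*$ at a putative interior point $x^*\in C^*$ and using that $\Sigma_{x^*}C^*$ is a codimension-one convex subset lying at distance at least $\pi/2$ from the directions to $F^*$, in the spirit of the proof of Proposition~\ref{P:restr_C}(a); once $C^*$ is known to be a second compact $(n-2)$-dimensional fixed component, your appeal to the Double Soul Theorem correctly finishes the proof. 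Your two-sidedness observation (connectedness of $E$ against $H_{n-1}(M;\Int_2)=0$) is sound, but it only disposes of a case that first has to be shown to be the only alternative.
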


The following proposition yields further information on the topology of $C^*$ in our particular case. 


\begin{prop}
\label{P:restr_C}Let $M$ be a closed, simply connected nonnegatively curved $5$-manifold with a fixed point homogenous $\Ss^1$-action.
\begin{itemize}
\item[\emph{(a)}] If $\Fix(M,\Ss^1)$ contains a $1$-dimensional component $N$, then $\dim C^* = 2$, $N^* = \partial C^*$ and $C^*$ is homeomorphic to  a $2$-disc.\\
\item[\emph{(b)}] If $C^*$ contains an isolated exceptional orbit $p^*$, then the preimage under the orbit projection map of an open regular neighborhood of $p^*$ in $C^*$ is a submanifold of $M$.\\
\item[\emph{(c)}] If $\partial C^*$ contains an isolated exceptional orbit $p^*$, then $\mathrm{S}^1_p = \mathbb{Z}_2$ and $C^*$ is homeomorphic to a closed interval. 
\end{itemize}
\end{prop}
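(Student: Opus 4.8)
The plan is to recover the structure of $C^*$ and of its preimage $C=\pi^{-1}(C^*)$ from the geometry of the orbit space $M^*$, using throughout the Soul Theorem for orbit spaces (so $\rho:=\operatorname{dist}(F^*,\cdot)$ is concave on $M^*$, its gradient flow deformation retracts $M^*$ onto $C^*$, and every point of $M^*\setminus(F^*\cup C^*)$ is principal), the identification $\Sigma_{x^*}M^*\cong S^{\perp}_x/\Ss^1_x$ of spaces of directions, the Slice Theorem near orbits, and the fact that $M^*$ is simply connected. Recall also that a component of $\Fix(M^5,\Ss^1)$ has even codimension, hence dimension $1$ or $3$.

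\emph{Part (a).} Since $N$ is a component of $\Fix(M,\Ss^1)$ disjoint from $F$, the set $N^*$ contains no principal orbits, so $N^*\subseteq F^*\cup C^*$; as $N^*\cap F^*=\emptyset$, $N^*\subseteq C^*$, and $N^*\cong N\cong\sphere^1$ because $\Ss^1$ acts trivially on $N$. Next I exclude $\dim C^*\in\{0,1,3\}$. If $\dim C^*=0$ then $C^*$, being connected (a retract of $M^*$), is a point, contradicting $\sphere^1\cong N^*\subseteq C^*$. If $\dim C^*=1$ then $C^*$ is a circle or an arc; containing the circle $N^*$ it must equal $N^*$, whence $M^*\simeq\sphere^1$ and $\pi_1(M^*)\neq 0$, a contradiction. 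If $\dim C^*=3=5-2$ then Lemma~\ref{L:FPH_SC}(a) gives that $C=C^*$ is a $3$-dimensional component of $\Fix(M,\Ss^1)$, so $N\subseteq C^*=C$; but $N$ and $C$ are distinct, hence disjoint, fixed point components, a contradiction. Thus $\dim C^*=2$, so $C^*$ is a compact, nonnegatively curved $2$-dimensional Alexandrov space, i.e.\ a compact surface with (possibly empty) boundary, and it is simply connected as a retract of $M^*$; hence $C^*\cong\sphere^2$ or $C^*\cong\cldisc^2$. It now suffices to prove $N^*\subseteq\partial C^*$: then $\partial C^*\neq\emptyset$, so $C^*\cong\cldisc^2$, and since $\partial C^*$ is then a single circle containing the circle $N^*$ we conclude $\partial C^*=N^*$. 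To establish $N^*\subseteq\partial C^*$ I would fix $z\in N$ and use that $\Sigma_{z^*}M^*\cong\sphere^4/\Ss^1$ is the spherical suspension of the weighted $2$-sphere $W:=S(\nu_z N)/\Ss^1$, the two suspension points being the directions tangent to $N$; since $N^*\subseteq C^*$ these two points, at distance $\pi$, lie in the $1$-dimensional space $\Sigma_{z^*}C^*$, which is therefore either an arc of length $\pi$ (so $z^*\in\partial C^*$) or a round circle of length $2\pi$ (so $z^*\in\operatorname{int}C^*$); the second alternative—equivalently, that $C^*$ extends away from $N^*$ in two normal directions at $z^*$—is ruled out by a local analysis, via the Slice Theorem, of the $\Ss^1$-invariant set $\pi^{-1}(C^*)$ near $N$.

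\emph{Part (b).} Here $A:=\pi^{-1}(p^*)=\Ss^1p$ is a single circle orbit with isotropy $\Ss^1_p=\Int_k$, and since $A^*=\{p^*\}$ is a point we have $\Fix(V,\Int_k)=0$ for $V:=\nu_p A\cong\RR^4$. By the Slice Theorem a neighborhood of $A$ in $M$ is equivariantly diffeomorphic to $\Ss^1\times_{\Int_k}V$, under which $\pi^{-1}(U^*)$ corresponds to $\Ss^1\times_{\Int_k}\tilde U$ for the $\Int_k$-invariant set $\tilde U\subseteq V$ with $\tilde U/\Int_k=U^*$. Since $U^*\setminus\{p^*\}$ consists of principal orbits, $\Int_k$ acts freely on $\tilde U\setminus\{0\}$, so $\tilde U\setminus\{0\}\to U^*\setminus\{p^*\}$ is a $k$-sheeted covering; together with the fact that $U^*$ is a topological manifold, this lets one verify directly that $\Ss^1\times_{\Int_k}\tilde U$ is a submanifold of $\Ss^1\times_{\Int_k}V$—away from $A$ it is a free $\Ss^1$-bundle over $U^*\setminus\{p^*\}$—so that $\pi^{-1}(U^*)$ is a submanifold of $M$.

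\emph{Part (c).} Again $A:=\pi^{-1}(p^*)=\Ss^1 p$, with isotropy $\Int_k$, $k\ge2$. A sufficiently small neighborhood $U^*$ of $p^*$ in $C^*$ is a regular neighborhood, and by part (b) the set $\tilde M:=\pi^{-1}(U^*)$ is a submanifold of $M$ of dimension $\dim C^*+1$ containing $A$. Since $\Fix(M,\Int_k)$ equals $A$ near $A$, the circle $A$ is an isolated $\Int_k$-fixed point component of $\tilde M$, so $\Int_k$ acts on the normal space $\RR^{\dim C^*}$ of $A$ in $\tilde M$ effectively and with zero fixed subspace. This is impossible if $\dim C^*=0$ (then $C^*=\{p^*\}$ and $\partial C^*=\emptyset$, contrary to hypothesis); if $\dim C^*=2$ it forces $U^*\cong\RR^2/\Int_k$ near $p^*$, so $p^*\in\operatorname{int}C^*$, again contrary to hypothesis; and if $\dim C^*=3$ it forces $k=2$, whereupon Lemma~\ref{L:FPH_SC}(a) makes $C$ fixed by $\Ss^1$, contradicting $\Ss^1_p=\Int_2$. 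Hence $\dim C^*=1$, and then $\Int_k\hookrightarrow O(1)=\Int_2$ forces $k=2$; finally a compact connected $1$-dimensional Alexandrov space with nonempty boundary is a closed interval.

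The step I expect to be the main obstacle is proving $N^*\subseteq\partial C^*$ in part (a)—equivalently, that the soul $C^*$ cannot be a closed surface—since this requires controlling the local shape of $C^*$ along the fixed circle $N$, not merely its dimension; once this is in hand, the remaining assertions follow from the soul structure and the Slice Theorem as above.
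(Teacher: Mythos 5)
Your overall strategy (control $C^*$ through tangent cones, spaces of directions and the Slice Theorem) matches the paper's, and several steps are sound: in (a) your exclusion of $\dim C^*=3$ via Lemma~\ref{L:FPH_SC}(a) is a legitimate substitute for the paper's splitting-theorem argument on $K_{p^*}C^*$, and your derivation in (c) of $k=2$ and $\dim C^*=1$ is essentially the paper's computation of $\Sigma_{p^*}C^*=\sphere^{\dim C^*-1}/\Int_k$. But two steps are genuinely missing. First, in (a) you never prove $N^*\subseteq\partial C^*$; you reduce it to ruling out that $\Sigma_{z^*}C^*$ is a full circle and then defer to ``a local analysis via the Slice Theorem,'' which you yourself flag as the main obstacle. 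The paper closes this with Kleiner's Isotropy Lemma: along a minimal geodesic the isotropy of interior points is locally constant and contained in the isotropy of the endpoints, so no shortest path between principal points can cross the fixed set $N^*$ in its interior; hence the codimension-one set $N^*\subseteq C^*$ cannot have $C^*$ on both sides and must lie in $\partial C^*$. Without some such argument part (a) is incomplete.

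Second, and more seriously, part (b) is asserted rather than proved exactly where it is hard, namely along the orbit $A=\Ss^1p$ itself. Knowing that $\tilde U\setminus\{0\}\to U^*\setminus\{p^*\}$ is a $k$-fold covering and that the set is a manifold away from $A$ says nothing about whether $\tilde U$ is smooth at $0$ --- a priori $\tilde U$ could be the cone over an arbitrary $\Int_k$-invariant subset of the unit sphere of $V$, which is a topological manifold off the vertex but not a submanifold at it. The paper's proof supplies the missing content: $\tilde C=\pi^{-1}(C^*)\cap S$ is the maximum set of the concave function $h(q)=d(\partial F^*,q^*)$ on the slice, hence locally convex, hence by Cheeger--Gromoll a submanifold with smooth interior and possibly nonsmooth boundary; and $p\in\partial\tilde C$ is excluded because the unique point of $\Sigma_p\tilde C$ at maximal distance from $\partial\Sigma_p\tilde C$ would have to be fixed by $\Int_k$, contradicting the freeness of the $\Int_k$-action on the unit normal sphere. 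Some version of this convexity-plus-soul argument (or another concrete smoothness argument at $0\in V$) is needed for (b), and since your (c) invokes (b), the gap propagates there as well.
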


\begin{proof} Let $p^* \in N^*\cong \sphere^1$. Consider the tangent cone $K_{p^*}C^*$ of $C^*$ at $p^*$. Since $K_{p^*}N^*\subset K_{p^*}C^*$ is isometric to $\mathbb{R}$,   
it follows from the Splitting Theorem for Alexandrov spaces that $K_{p^*}C^*$ is isometric to a direct product $\mathbb{R} \oplus W$, where $W$ is contained in $(\nu_p N)/\Ss^1$ and $\nu_pN$ denotes the normal bundle of $N$ at $p$. There exists some orbit $\Ss^1u \subset \sphere^3 \subset \nu_p N$ wich has distance $\geq \pi /2$ to all orbits in $\sphere^3$ whose corresponding directions are contained in $W$. Since $\Ss^1$ is acting without fixed points on $\sphere^3$ it follows that $\dim W \leq 1$ and thus $\dim C^* \leq 2$. Because $N^* \cong \sphere^1$ is contained in $C^*$ we cannot have $\dim C^* = 1$, since $M^*$ is simply connected. Further, no shortest path can cross $N^*$ by Kleiner's isotropy lemma. Since $N^*$ has codimension $1$ we see that $N^* \subseteq \partial C^*$ and equality holds again since $M^*$ is simply connected. This proves (a).

We now prove parts (b) and (c). Let $\mathbb{Z}_k$ be the isotropy group of $p^{*}$ and let $(S,g)$ be a slice at $p$ with a metric $g$ such that $(S,g)/\mathbb{Z}_k$ is isometric to $B_r(p^{*})$, the open ball of radius $r$ around $p^*$. Consider the map 
$$
h:(S,g) \rightarrow \mathbb{R},\ q \mapsto d(\partial F^{*},q^*).
$$
Let $\tilde{C} := \pi^{-1}(C^*) \cap S$ and observe that $\tilde{C}= \{\, q\in S : h(q)\ \text{maximal} \,\}$.
Note that $h$ is concave, so $\tilde{C}$ is a locally convex subset of $S$ and hence a submanifold of $S$ with smooth interior and possibly nonsmooth boundary by the work of Cheeger and Gromoll (see \cite{ChGr1972}). Assume now that $p \in \partial \tilde{C}$. Since $\tilde{C}$ is invariant under the action of $\mathbb{Z}_k$ on $S$, the space of directions $\Sigma_{p}\tilde{C}$ is invariant under the free action of $\mathbb{Z}_k$ on $\sphere^3$, the unit normal sphere to $\Ss^1p$. Since $\Sigma_{p}\tilde{C}$ is a positively curved Alexandrov space, there is a unique point $v \in \Sigma_{p}\tilde{C}$ at maximal distance from $\partial \Sigma_{p}\tilde{C}$, and $v$ must be fixed by $\mathbb{Z}_k$. This is a contradiction to $\mathbb{Z}_k$ acting freely. Therefore $p \in \operatorname{Int}(\tilde{C})$. After choosing a smaller $r$, if necessary, we have $\partial \tilde{C} = \emptyset$. We then have that $\pi^{-1}(B_r(p^{*})) = \Ss^1 (\tilde{C})$ is a smooth submanifold of $M$ with empty boundary, which proves (b). 

To prove (c), observe that $\Sigma_{p^{*}}C^* = \Sigma_{p}\tilde{C}/\mathbb{Z}_k = \sphere^n/\mathbb{Z}_k,$ with $n = \dim C^* -1$ and $\mathbb{Z}_k$ acting freely. Hence $\Sigma_{p^{*}}C^*$ has boundary if and only if $\dim C^* = 1$ and $k = 2$.
\end{proof}

We now show that $M^5$ decomposes as the union of two disc bundles.


\begin{prop}
Let $M$ be a closed, simply connected nonnegatively curved $5$-manifold with a fixed point homogenous $\Ss^1$-action and let $F$ be a three-dimensional component of $\operatorname{Fix}(M,\Ss^1)$.  Then there exists a closed, invariant and orientable submanifold $H \subset M$ with $\dim H \in \{1,3\}$ such that $M$ is diffeomorphic to the union of two distance tubular neighbourhoods of $F$ and $H$ glued together along their common boundary $E$, i.e.,
$$
	M = D(F) \cup_{E} D(H).
$$
\end{prop}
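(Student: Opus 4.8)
The plan is to use the Soul Theorem for orbit spaces to split $M^*$ along the level hypersurface of $\operatorname{dist}(F^*,\cdot)$ and then analyze the soul $C^*$ case by case according to its dimension, using Proposition~\ref{P:restr_C} and Lemma~\ref{L:FPH_SC} to control its topology. Recall that $F^*$ lies in $\partial M^*$, that $\operatorname{dist}(F^*,\cdot)$ is concave, and that $M^*$ deformation retracts onto $C^*$ with all points outside $F^*\cup C^*$ principal. Set $F = \pi^{-1}(F^*)$ (already given as the $3$-dimensional fixed component) and $H = \pi^{-1}(C^*)$. The goal is to show $H$ is a closed, invariant, orientable submanifold of $M$ of dimension $1$ or $3$, and that $M = D(F)\cup_E D(H)$ for the distance tubular neighborhoods.

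First I would recall (from the discussion preceding the Double Soul Theorem and from \cite{GroveSearle1994,GroveSearle1997}) that the decomposition $M = D(F)\cup_E D(C)$ with $D(C) = \pi^{-1}(D(C^*))$ holds at the level of orbit spaces as a gluing of distance tubular neighborhoods in $M^*$, and that this pulls back $\Ss^1$-equivariantly to $M$; the only issue is whether $D(H)$ is genuinely a tubular neighborhood of a submanifold, which amounts to showing $H$ itself is a smooth closed submanifold. Since $\dim M^* = 4$ and $0\le \dim C^*\le 3$, I would rule out $\dim C^* = 0$: an isolated soul point $p^*$ would have $\Sigma_{p^*}M^*$ isometric to the unit sphere (positively curved with no boundary forces... ) — more carefully, $\operatorname{dist}(F^*,\cdot)$ concave with a unique maximum point forces $M^*$ to be a cone-like neighborhood, but $F^*$ is $3$-dimensional in the $4$-dimensional boundary, and a dimension count on $\Sigma_{p^*}M^* = \sphere^3/\,?$ combined with the existence of the $3$-dimensional $F^*$ at maximal distance gives a contradiction; alternatively $\dim C^*=0$ would make $M^*$ contractible to a point so $M \simeq D(F)$, forcing $M$ to be the total space of a disc bundle over $F^3$, which has nonempty boundary — contradicting $M$ closed. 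So $\dim C^*\in\{1,2,3\}$.

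Next, in the case $\dim C^* = 3$: by Lemma~\ref{L:FPH_SC}(a), $C$ is fixed by $\Ss^1$, $C^* = C$ is a smooth closed $3$-manifold isometric to $F$, hence certainly a closed invariant orientable submanifold; take $H = C$ and we are done. In the case $\dim C^* = 1$: by Proposition~\ref{P:restr_C}(c), any isolated exceptional orbit in $\partial C^*$ would force $C^*$ to be a closed interval, but then $C^*$ has boundary consisting of such points, and one checks via Proposition~\ref{P:restr_C}(b) and a local analysis that $\pi^{-1}(C^*)$ is still a closed $1$-manifold (a circle) smoothly embedded in $M$ — the $\mathbb{Z}_2$-isotropy points are handled by Proposition~\ref{P:restr_C}(b),(c) which say the preimage of a regular neighborhood is a submanifold with empty boundary; patching these local submanifold structures gives that $H = \pi^{-1}(C^*)$ is a closed $1$-manifold, and since $M$ is simply connected $H$ is orientable. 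The remaining case $\dim C^* = 2$ is where the real work lies: here $C^*$ is a locally convex subset of $M^*$, hence (by Cheeger–Gromoll, \cite{ChGr1972}) a $2$-dimensional topological submanifold with possibly nonsmooth boundary; I would argue that $\partial C^* $ is either empty or consists of fixed-point curves (where $\Ss^1$ has a $1$-dimensional fixed component) or of the exceptional-orbit/$\mathbb{Z}_2$ locus, and in all situations Proposition~\ref{P:restr_C}(b) shows the bad points have preimages that are manifolds; one then shows $H = \pi^{-1}(C^*)$ is a closed, orientable, invariant $3$-manifold (over the interior of $C^*$ it is an $\Ss^1$-bundle; over boundary strata it is either the double cover structure from $\mathbb{Z}_2$ or the fixed set), and that $D(H)$ is its tubular neighborhood.

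The main obstacle I expect is precisely the $\dim C^* = 2$ case: establishing that $H = \pi^{-1}(C^*)$ is a genuine smooth closed submanifold of $M$ rather than merely a stratified space. The subtlety is that $C^*$ may have a nonsmooth boundary in $M^*$ and may contain exceptional orbits; one must show these singularities are "resolved" upstairs in $M$. Proposition~\ref{P:restr_C} is designed exactly to handle the isolated exceptional orbits (parts (b), (c)), and the fixed-point boundary curves $N^*$ from part (a) are handled since $\pi^{-1}$ of a fixed curve is that curve itself sitting inside the $3$-manifold $H$; the curvature (concavity of the distance function, positive curvature of spaces of directions) is what forces the local pictures to be standard. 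Once $H$ is known to be a closed invariant submanifold of the right dimension, orientability follows from $\pi_1(M) = 1$ (closed codimension-$\le 4$ submanifolds... more precisely, any closed codimension-$2$ or codimension-$4$ submanifold of a simply connected manifold that arises as a soul in this way is orientable, which one sees from the structure of the normal bundle and the fact that $M = D(F)\cup_E D(H)$ with $F$ orientable), and the gluing $M = D(F)\cup_E D(H)$ is then the Soul-Theorem decomposition pulled back to $M$.
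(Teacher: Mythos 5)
There is a genuine gap: you fix $H=\pi^{-1}(C^*)$ throughout, but this set is in general not a closed submanifold, and the paper's proof must choose $H$ differently in several configurations. Concretely: (i) your exclusion of $\dim C^*=0$ is fallacious --- that $M^*$ deformation retracts to a point does not give $M\simeq D(F)$; one still has $M=D(F)\cup_E D(C)$ with $C=\Ss^1p$ a circle orbit, and this case genuinely occurs (it is one of the sources of $\dim H=1$ in the statement). (ii) If $\dim C^*=1$ then $C^*$ is a closed interval of principal/exceptional orbits (a $1$-dimensional fixed curve cannot equal $C^*$ by Proposition~\ref{P:restr_C}(a)), so $\pi^{-1}(C^*)$ is $2$-dimensional, not the ``closed $1$-manifold'' you assert; the paper instead takes $H$ to be the single orbit over a suitable endpoint (when one endpoint is principal), or else is forced to consider a $2$-dimensional candidate $H=\pi^{-1}(C^*)$ and must rule it out separately. (iii) When $\dim C^*=2$ and $\partial C^*\neq\emptyset$ with no fixed curve on the boundary (the paper's Case 3.2.1), $\pi^{-1}(C^*)$ is a compact $3$-manifold \emph{with boundary} (e.g.\ a solid torus over a disc of principal orbits), so it cannot serve as the closed soul $H$; the key move here --- absent from your proposal --- is to iterate the soul construction inside $C^*$, passing to the set $C_1^*$ at maximal distance from $\partial C^*$ and taking $H$ over $C_1^*$ or over one of its endpoints.

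Beyond the choice of $H$, two further steps are required that you do not supply: the exclusion of $\dim H=2$ (which arises from an interval with $\mathbb{Z}_2$-isotropy at both endpoints and is ruled out in the paper by computing $\pi_1(E)$ from the two fibrations of $E$ over $F$ and over $H$), and the orientability of $H$ when $\dim H=3$, for which ``follows from $\pi_1(M)=1$'' is not an argument: the paper lifts the $\Ss^1$-action to the pullback of $\nu H$ over the orientation double cover of $H$ and assembles from it a connected double cover of $M$, contradicting simple connectivity. Your treatment of $\dim C^*=3$ and of the boundaryless cases $C^*=\{p^*\}$ or $C^*\cong\sphere^2$ does match the paper, and you invoke Proposition~\ref{P:restr_C} and Lemma~\ref{L:FPH_SC} for the right purposes there, but the configurations in which $C^*$ has boundary consisting of non-fixed orbits are exactly where the substance of the proposition lies, and your argument does not cover them.
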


\begin{proof}

First assume $\dim C^* = 3$. It follows from Lemma~\ref{L:FPH_SC} that $C$ is fixed by the $\Ss^1$-action, so $C^*$ is the projection of a second fixed point component of maximal dimension $3$. By the Double Soul Theorem we see that $M^*$ is isometric to $F^* \times I$ for some interval $I$ and that $M$ is diffeomorphic to an $\sphere^2$ bundle over $F$. Since $M^*$ is simply connected, $F$ must be simply connected . Furthermore, since $F$ is totally geodesic, $F$ is nonnegatively curved and it follows from Hamilton's work \cite{Ha1986} that $F$ is diffeomorphic to $\sphere^3$. Since a sphere bundle over $\sphere^3$ must be trivial it follows that $M$ is diffeomorphic to 
$\sphere^3 \times \sphere^2$.

Let us now assume $\dim C^* \leq 2$. Since all singularities apart from $F^*$ are contained in $C^*$ it is clear that $F$ is the only fixed point component of dimension $3$.  Note first that the distance functions $d_{C^*}$ and $d_C$ are regular on the regular parts of $M^*$ and $M$, respectively. We distinguish several cases.
\\

\noindent \textit{Case 1.} Assume there exists a one-dimensional fixed point component $S$. Let $H := C = \pi^{-1}(C^*)$. We will show that $H$ is a three-dimensional closed submanifold of $M$. To see this first assume that there occur nonisolated singularities in $C^*$ wich do not correspond to fixed points. For dimensional reasons there is some $2$-dimensional singular set, $K^*$,  corresponding to a $3$-dimensional fixed point component $K$ of some $\mathbb{Z}_k \subset \Ss^1$. Since $\dim C^* \leq 2$ it follows easily from  Kleiner's isotropy  lemma that $C^* = K^*$, so $H = K$. Hence we may assume that there occur only isolated singularities in the interior of $C^*$. Let $p^* \in S^*$ and $[v] \in \Sigma_{p^*}C^*$ with $\angle ([v],S^*) = \pi/2$. Choose a shortest geodesic from $C^*$ to $F^*$ starting at $p^*$ with initial direction $[w] \in \Sigma_{p^*}C^*$. We have $v,w \in \sphere^3 \subset \nu_pS$ and $\angle(\mathrm{S^1}v,\mathrm{S^1}w) = \pi/2$. Since $\mathrm{S^1}$ acts without fixed points on $\sphere^3 \subset \nu_pS$ it follows that $\mathrm{S}^1v$ and $\mathrm{S}^1w$ are contained in 2-dimensional orthogonal subspaces $V,W \subset \nu_pS$, respectively. Therefore a neighbourhood of $p$ in $H$ is contained in $\exp_p(T_pS \oplus V)$. Since $H$ is three-dimensional we see that $H$ is a smooth submanifold in a neighbourhood of $S$. By Proposition~\ref{P:restr_C} it follows that $H$ is a smooth submanifold globally.

As the distance function $d_H$ is regular on $M\setminus(F \cup H)$ one can now define a gradient-like vector field on $M\setminus(F \cup H)$ which is radial near $F$ and $H$, and by this a diffeomorphism $M \cong D(F) \cup_{E} D(H)$ (cf. \cite{Gr1993}).
\\

\noindent\textit{Case 2.} Assume now that $F$ is the only fixed point set component and non-isolated exceptional orbits occur. As in case 1, set $H := C$, wich is a fixed point component of some $\mathbb{Z}_k\subset \mathrm{S}^1$ and hence a closed submanifold.  Similarly, we see that $M \cong D(F) \cup_{E} D(H)$.
\\

\noindent\textit{Case 3.} Assume that $F$ is the only fixed point set component and if exceptional orbits occur, then they are isolated.
\\

\textit{Case 3.1.} Suppose that $\partial C^*=\emptyset$. Because of simple-connectivity we have, up to homeomorphism, $C^* = \{\,p^*\,\}$ or $C^* = \sphere^2$, for some point $p^* \in M^*$. In both cases we define $H =C$ and conclude that $M = D(F) \cup_{E} D(H)$ as above.
\\

\textit{Case 3.2.} Suppse that $\partial C^* \neq \emptyset$. In this case $C^*$ is homeomorphic to a $2$-disc or to a closed interval. We analyze each case separately.
\\

\textit{Case 3.2.1.} $C^*$ is homeomorphic to a $2$-disc.\\
Let 
$$
C^*_1 := \{\, x^*\in C^* : x^* \text{ is at  maximal distance from } \partial C^*\},
$$
so that $C^*_1$ is the next step of the soul construction. This yields a closed interval or a point. If there is an isolated singularity $p^*$ in $C^*$ we have either $C^*_1 = \{p^*\}$ or $p^*$ is a boundary point of $C^*_1$.
\\

\textit{Case 3.2.1.1:} Assume that $C^*_1$ is a point  $p^*$. In this case the distance function $d_{p^*}$ is regular on $M^*\setminus (F^* \cup \{\,p^*\,\})$ so we may define $H := \pi^{-1}(p^*)=\G p$.

\textit{Case 3.2.1.2:} Assume that $C^*_1$ is a closed interval with corresponding isotropy groups
$$
\mathbb{Z}_k \dotsb 1 \dotsb \mathbb{Z}_l.
$$
We then get $\{k,l\} \subset \{1,2\}$. If we have $k = l = 2$, we set $H = \pi^{-1}(C^*_1)$ and are done. If the isotropy at one of the endpoints of $C_1^*$ corresponds to $1$, the distance function from the second endpoint, say $p^*$, is regular outside $(F^* \cup \{\,p^*\,\})$. Thus we define $H$ as $\pi^{-1}(\{\,p^*\,\})=\G p$ and are again done.
\\

\textit{Case 3.2.2.} $C^*$ is homeomorphic to a closed interval. Here we proceed as in case \textit{3.2.1.2.}
\\

Because of the restrictions to the possible dimensions of singular components we now have covered all possible singularity configurations. 
Next we show that the case $\dim H = 2$ cannot occur and that $H$ is orientable, which is nontrivial in case $\dim H = 3$.
\\

Let $\dim H = 2$. Since $H$ has codimension $3$ in $M$ we may excise $H$ without changing the fundamental group of $M$ (cf. Lemma~\ref{L:FPH_SC}). Thus we get 
$$
	0 = \pi_1(M\setminus H) = \pi_1((D(F) \cup_{E} D(H))\setminus H) = \pi_1(F).
$$
Since $F$ is nonnegatively curved,  $F \cong \sphere^3$ by Hamilton's work \cite{Ha1986}. Recall that $E = \partial D(F).$ The exact sequence of homotopy groups for the fiber bundle $\sphere^1 \rightarrow E \rightarrow F$ then gives $\pi_1(E) = \mathbb{Z}$. On the other hand $E$ is an $\sphere^2$-bundle over $H$, so from the fiber bundle $\sphere^2 \rightarrow E \rightarrow H$ we see that $\pi_1(H) = \mathbb{Z}$. Since $H$ is compact and 2-dimensional this gives a contradiction as there exists no compact $2$-manifold with fundamental group $\mathbb{Z}$. Alternatively one can easily see from the construction of $H$ that in the case $\dim H =2$ we get $H \cong \mathbb{K}^2$, the Klein bottle, which has different fundamental group.
\\

It remains to show that $H$ is orientable for $\dim H = 3$. Observe that if $H$ is fixed by some finite (nontrivial) subgroup of $\Ss^1$, then $H$ is orientable. Hence we may assume for simplicity that  $\Ss^1$ acts effectively on $H$. We proceed by contradiction and  assume that $H$ is non-orientable. Let $\tilde{H}$ be the total space of the orientable double cover $\kappa : \tilde{H} \rightarrow H$. Recall that $\tilde{H}$ is connected. Consider the effective $\Ss^1$-action on the pull-back bundle $\kappa^* (\nu H)$ covering the given action on $\nu H$ (cf. \cite{Br1972}). Let $\tilde{q} \in \tilde{H} \subset \kappa^*(\nu H)$ with $\kappa(\tilde{q}) = q$ and $\Ss^1_q = \{e\}$. Since $\Ss^1q$ defines an orientation preserving loop in $H$ we see that $\kappa _{|\Ss^1\tilde{q}}$ is injective. So, if $\Gamma$ denotes the group of deck transformations of $\kappa^* (\nu H)$, it follows that $\Gamma \cap \Ss^1$ equals the identity. From \cite[Theorem 9.1, Ch. 1]{Br1972} we see that the projection
$$
	\tilde{\kappa} : \kappa^*( \nu H)  \rightarrow \nu H
$$
 is an equivariant double covering map. This yields a double cover 
 $$
 \kappa^*(\nu^1 H )\times_{\Ss^1} D^2 \rightarrow \nu^1 H \times_{\Ss^1} D^2 \cong D(F).
 $$
 So one can construct a double covering map 
 $$
 (\kappa^*(\nu^{\leq1} H)) \cup (\kappa^*(\nu^1 H) \times_{\Ss^1} D^2) \rightarrow (\nu^{\leq1} H) \cup (\nu^1 H \times_{\Ss^1} D^2) \cong M^5.
 $$
 Since $M$ is simply connected the total space cannot be connected, wich is a contradiction. This concludes the proof of the proposition.
 \end{proof}

Now we are able to compute $H_2(M^5,\mathbb{Z})$, thus concluding the proof of the Main Theorem.


\begin{prop}
Let $M$ be a closed, simply connected nonnegatively curved $5$-manifold with a fixed point homogenous $\Ss^1$-action. 
Then $H_2(M,\mathbb{Z}) = 0$ or $\mathbb{Z}$.
\end{prop}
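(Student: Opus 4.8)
The plan is to exploit the decomposition $M = D(F) \cup_E D(H)$ from the previous proposition together with a Mayer--Vietoris argument. First I would record the homotopy types of the pieces. The neighborhood $D(F)$ is a disc bundle over $F$, and since $F \subset \Fix(M,\Ss^1)$ the action on the normal bundle is linear and fiberwise; in our cohomogeneity-four situation the normal sphere is $\sphere^1$, so $\partial D(F) = E$ is an $\sphere^1$-bundle over $F$. By the preceding results $F$ is a closed, orientable, nonnegatively curved $3$-manifold (when $\dim H = 1$) or, when $\dim H = 3$, one needs to argue the same way on the $H$ side. Either way, $D(F) \simeq F$ and $D(H) \simeq H$, and $E$ is simultaneously an $\sphere^1$-bundle over one of them and an $\sphere^2$-bundle over the other (the sphere bundles having total dimension $4$).

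The key step is the Mayer--Vietoris sequence in integral homology for $M = D(F) \cup D(H)$ with $D(F) \cap D(H) \simeq E$:
\[
H_3(E) \to H_3(D(F)) \oplus H_3(D(H)) \to H_3(M) \to H_2(E) \to H_2(D(F)) \oplus H_2(D(H)) \to H_2(M) \to H_1(E) \to \cdots
\]
Since $M$ is simply connected one has $H_1(M) = 0$, which together with $H_1(E) \to H_1(D(F)) \oplus H_1(D(H))$ constrains things from below; and since $\pi_1(D(F))$, $\pi_1(D(H))$ and $\pi_1(E)$ are all either trivial or $\mathbb{Z}$ (computed from the sphere-bundle homotopy sequences, exactly as in the orientability argument of the previous proof), the relevant homology groups are all finitely generated of very small rank. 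One then runs through the short list of possibilities for $(F, H)$: $H$ is a point, a circle, or a closed orientable $3$-manifold, and correspondingly $F$ is $\sphere^3$ or a closed orientable $3$-manifold carrying a free circle action (hence a circle bundle over a surface, so $S^2\times S^1$, $S^3$, or a lens-type quotient after passing to the relevant cover). In each case the Gysin sequences of the two sphere bundles $\sphere^1 \to E \to (\text{base})$ and $\sphere^2 \to E \to (\text{base})$ pin down $H_*(E)$, and feeding this into Mayer--Vietoris computes $H_2(M)$.

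The main obstacle I expect is bookkeeping rather than a single hard idea: one must treat the two essentially different cases $\dim H = 1$ and $\dim H = 3$, and within each handle the sub-possibilities for the base manifolds and for whether the circle action has finite isotropy $\mathbb{Z}_k$ along $H$. The circle-bundle case for $F$ (when $F \not\cong \sphere^3$, i.e.\ $F$ is Seifert-fibered over $\sphere^2$) is where a spurious extra generator of $H_2$ could appear, so I would check carefully that the Mayer--Vietoris map $H_2(E) \to H_2(D(F)) \oplus H_2(D(H))$ is surjective onto a subgroup of rank $\le 1$ and that its cokernel injects into $H_1(E) \cong \mathbb{Z}$, forcing $H_2(M)$ to be $0$ or $\mathbb{Z}$; torsion is excluded because a simply connected $5$-manifold has torsion-free $H_2$ by Poincar\'e duality and the universal coefficient theorem (equivalently, one invokes the Barden--Smale framework only at the very end). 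Once $H_2(M,\mathbb{Z}) \in \{0, \mathbb{Z}\}$ is established, the Barden--Smale classification identifies $M$ with $\sphere^5$ or one of the two $\sphere^3$-bundles over $\sphere^2$, completing the Main Theorem.
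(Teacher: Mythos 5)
Your starting point -- the decomposition $M = D(F)\cup_E D(H)$ plus Mayer--Vietoris -- is exactly how the paper handles the easy case $\dim H=1$ (there $F\cong\sphere^3$, $H\cong\sphere^1$, and the sequence immediately gives $H_2(M)=0$). But in the main case $\dim H=3$ your plan has a genuine gap. First, a factual error that removes your mechanism for excluding torsion: it is \emph{not} true that a closed simply connected $5$-manifold has torsion-free $H_2$; the Wu manifold $\SU(3)/\SO(3)$ has $H_2=\mathbb{Z}_2$, and Poincar\'e duality plus universal coefficients only yields $T(H_2)\cong T(H^3)\cong T(H_2)$, which is vacuous. Since in your sequence $H_2(M)$ is an extension of $\ker\bigl(H_1(E)\to H_1(F)\oplus H_1(H)\bigr)$ (an intersection of two cyclic subgroups, so possibly finite cyclic) by $\operatorname{coker}\bigl(H_2(E)\to H_2(F)\oplus H_2(H)\bigr)$, both torsion and rank $\geq 2$ must be excluded by hand, and nothing in your outline does this.

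Second, the input needed to close the bookkeeping is missing. When $\dim H=3$ both $F$ and $H$ are closed orientable $3$-manifolds and $E$ is an $\sphere^1$-bundle over \emph{each} (not an $\sphere^2$-bundle over one of them, and $F$, being pointwise fixed, carries no free circle action). A priori you do not know that $\pi_1(F)$ is cyclic, so you cannot restrict $F$ to $\sphere^3$, lens-type quotients, or $\sphere^2\times\sphere^1$, nor control $H_2(F)$, $H_2(H)$ and the maps from $H_2(E)$. The paper supplies the missing constraint by a construction you would need to reproduce in some form: since $H$ and $M$ are orientable, one can rotate the fibers of $\nu H$ to get a second circle action $\Ss^1_2$ commuting with the given one, hence a $\mathrm{T}^2$-action on $M$; passing to an auxiliary total space $\hat M$ fibered by $\sphere^3\times\sphere^3$ over $M$ and by $\sphere^3$ over $E\times_{\mathrm{T}^2}(\sphere^3\times\sphere^3)$, one deduces that $\pi_1(E)$ is \emph{generated} by the two fiber circles. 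This forces $\pi_1(F)$ to be cyclic, so Hamilton's theorem pins down $\pi_2(F)$, and the long exact sequences of the three fibrations then give $\pi_2(M)\cong\pi_2(\hat M)\in\{0,\mathbb{Z}\}$. Van Kampen only gives you that $\pi_1(E)$ is \emph{normally} generated by the two fiber classes, and the homological shadow of that statement (surjectivity of $H_1(E)\to H_1(F)\oplus H_1(H)$) is strictly weaker; without the torus trick or a substitute for it, your case analysis does not terminate.
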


\begin{proof}
 Recall that $E := \partial D(F)$. In the case when $\dim H = 1$, we see that $F \cong \sphere^3$ and $H_1(E,\mathbb{Z}) = \pi_1(E) = \mathbb{Z}$ as above. Since $H$ is compact we have $H \cong \sphere^1$. From the Mayer Vietoris sequence we get the exact sequence
$$
\dots \rightarrow 0 \rightarrow H_2(M,\mathbb{Z}) \rightarrow \mathbb{Z} \rightarrow \mathbb{Z} \rightarrow 0
$$
and conclude that $H_2(M,\mathbb{Z}) = 0$.

Suppose now that $\dim H = 3$. First note that since $H$ and $M$ are orientable it is possible to define a second $\Ss^1$-action on the normal bundle $\nu H$ of $H$ by rotating the fibers. We will denote this second circle acting on $\nu H$ by $\Ss^1_2$. This action commutes with the initial fixed point homogeneous $\Ss^1$-action given on $M$, which we shall denote by $\Ss^1_1$. In this way  we get an action of $\mathrm{T}^2=\Ss^1_1\oplus\Ss^1_2$  on  $\nu H$, and thus on $D(H)$ via the exponential map. We can extend this action to $D(F)$ and $M$ in the following way. The boundary of $D(H)$ is invariant under the $\mathrm{T}^2$-action and diffeomorphic to $E = \partial D(F)$, via the gluing map of the decomposition $M \cong D(F) \cup_E D(H)$, so we can pull back the action to $E$. Note that we have $D(F) \cong E \times_{\Ss^1_1} D^2$ and $D(H) \cong E \times_{\Ss^1_2} D^2$, where $\Ss^1_i$ acts on the unit disc $D^2$ from the right via rotation.

Now we can extend the action defined on $E$ to $E \times_{\Ss^1_1} D^2$ by 
$$
(\theta_1,\theta_2) \cdot [p,v]_1 := [(\theta_1,\theta_2) \cdot p,v]_1
$$ 
for $p \in E, v \in D^2$ and $(\theta_1,\theta_2) \in \mathrm{T^2} = \Ss^1_1 \oplus \Ss^1_2$. Thus we get a $\mathrm{T}^2$-action on $(E \times_{\Ss^1_1} D^2) \cup (E \times_{\Ss^1_2} D^2) \cong M$. By the Hurewicz theorem we have $H_2(M,\mathbb{Z}) \cong \pi_2(M)$, so we need to calculate the latter group. 

Let $\Ss^1_1$ and $\Ss^1_2$ act freely from the right on $\sphere^3$ and consider the induced free action of $ \mathrm{T}^2=\Ss^1_1 \oplus \Ss^1_2 $ on $\sphere^3 \times \sphere^3$. In order to calculate $\pi_2(M)$ it is convenient to consider 
$$\hat{M} \cong (E \times \sphere^3 \times \sphere^3) \times_{\mathrm{S}^1_1} D^2 \cup (E \times \sphere^3 \times \sphere^3) \times_{\mathrm{S}^1_2} D^2$$
together with the $\mathrm{T}^2$-action 
$$
t \cdot [(p,x,y),v]_i := [(t \cdot p,(x,y) \cdot t^{-1}),v]_i,
$$ for $i \in \{1,2\}, t \in \mathrm{T}^2, p \in E, (x,y) \in \sphere^3 \times \sphere^3$ and $v \in D^2$. Then we have a fibration $\sphere^3 \times \sphere^3 \rightarrow \hat{M} \rightarrow M$ coming from the projection $[(p,x,y),v]_i \mapsto [p,v]_i$ and we see that $\pi_2(M) \cong \pi_2(\hat{M})$, which we will calculate as follows.

Observe first that $\mathrm{T}^2$ acts freely on $E \times \sphere^3 \times \sphere^3$ and we have a fibration $N^3 \rightarrow \hat{M} \rightarrow E \times_{\mathrm{T}^2} \sphere^3 \times \sphere^3$ coming from the map $[(p,x,y),v]_i \mapsto [p,(x,y)]$. Now we show that the fiber $N^3$ is diffeomorphic to a 3-sphere.

Let $q = [p_0,(x_0,y_0)] \in E \times_{\mathrm{T}^2} \sphere^3 \times \sphere^3$. The fiber over $q$ is given by 
$$
	\{\, [(\theta_1,\theta_2)\cdot p_0,x_0\cdot\theta_1^{-1},y_0\cdot\theta_2^{-1},v]_i\  :\ (\theta_1,\theta_2) \in T^2, v \in D^2, i \in \{1,2\} \, \},
$$
which is clearly invariant under the $\mathrm{T}^2$ action on $\hat{M}$. A point in $[(p,x,y),v]_i \in \hat{M}$ has nontrivial isotropy if and only if $v = 0$ and  for points $\overline{p}_i = [(p_0,x_0,y_0),0]_i$, $i=1,2$, we have $\mathrm{T}^2_{\overline{p}_1} = \Ss^1_1 \oplus 0$ and $\mathrm{T}^2_{\overline{p}_2} = 0 \oplus \Ss^1_2$. It follows that $N^3/\mathrm{S}^1_1$ is a smooth $2$-manifold with a circle as boundary admitting a smooth $\Ss^1_2$ action fixing a unique point. Hence $N^3/\Ss^1_1$ is a 2-disc whose boundary and interior points correspond to fixed points and principal orbits respectively. It follows from the the classification of smooth $3$-manifolds with a smooth circle action  \cite{Ra,ORa} that $N^3$ is diffeomorphic to $\sphere^3$ (cf. also \cite{GroveSearle1997}).

Therefore we have $\pi _1(N^3) = \pi_2(N^3) = 0$ and from the exact sequence for the fibration $N^3 \rightarrow \hat{M} \rightarrow E \times_{\mathrm{T}^2} \sphere^n \times \sphere^n$ we get an isomorphism
$$
	\pi_2(\hat{M}) \cong \pi_2(E \times_{\mathrm{T}^2} \sphere^3 \times \sphere^3).
$$
So we have to show that the last group is either isomorphic to $\mathbb{Z}$ or $0$.

Since $\hat{M}$ is simply connected $\pi_1(E \times_{\mathrm{T}^2} \sphere^3 \times \sphere^3) = 0$ and we get the following three exact sequences:
\begin{align}
\label{E:les_1}
0 \rightarrow \pi_2(E \times \sphere^3 \times \sphere^3) \rightarrow \pi_2(E \times_{\mathrm{T}^2} \sphere^3 \times \sphere^3) \rightarrow \pi_1(\mathrm{T}^2) \xrightarrow{j} \pi_1(E \times \sphere^3 \times \sphere^3) \rightarrow 0,
\end{align}
\begin{align}
\label{E:les_2}
0 \rightarrow \pi_2(E) \rightarrow \pi_2(F) \rightarrow \pi_1(\Ss^1_1) \xrightarrow{i_1} \pi_1(E) \xrightarrow{p_1} \pi_1(F) \rightarrow 0,
\end{align}
\begin{align}
\label{E:les_3}
0 \rightarrow \pi_2(E) \rightarrow \pi_2(H) \rightarrow \pi_1(\Ss^1_2) \xrightarrow{i_2} \pi_1(E) \xrightarrow{p_2} \pi_1(H) \rightarrow 0.\nonumber
\end{align}

Let 
$$
	k = i_1 \oplus i_2 : \pi_1(\Ss^1_1) \oplus \pi_1(\Ss^1_2) \longrightarrow \pi_1(E).
$$ 
Then $k$ is surjective, since $j$ is surjective and $\ker k \cong \ker j$. From the surjection $\pi_1(\Ss^1_2) \xrightarrow{i_{2}} \pi_1(E)/\operatorname{im}(i_1) \cong \pi_1(F)$ it follows that $\pi_1(F)$ is cyclic, so we distinguish two cases. 
\\

\noindent\textit{Case 1:} $\pi_1(F) \cong \mathbb{Z}_n$ for some $n$. Since $F$ has nonnegative curvature, by the work of Hamilton we know that $F$ is diffeomorphic to a spaceform $S^3/\mathbb{Z}_n$. Thus $\pi_2(F) = 0$. Concluding $\pi_2(E) = 0$ yields an isomorphism 
$$
\pi_2(E \times_{\mathrm{T}^2} \sphere^3 \times \sphere^3) \cong \operatorname{ker}(j).
$$ 
Furthermore $i_1$ is injective, yielding that $\ker k \cong \ker j$ is either $0$ or isomorphic to $\Int$.
\\

\noindent\textit{Case 2:} $\pi_1(F) \cong \mathbb{Z}$. Consider the universal cover $\tilde{F}$ of $F$. Since $F$ has nonnegative curvature it follows from the splitting theorem that $\tilde{F}$ is diffeomorphic to $\mathbb{R} \times \sphere^2$ so we see that $\pi_2(F) \cong \mathbb{Z}$. From (\ref{E:les_2}) we get
$$
	0 \rightarrow \pi_2(E) \rightarrow \mathbb{Z} \xrightarrow{l} \mathbb{Z} \xrightarrow{i_1} \pi_1(E) \xrightarrow{p_1} \mathbb{Z} \rightarrow 0.
$$

\textit{2.1.} Let $l \equiv 0$. Hence we have $\pi_2(E) \cong \mathbb{Z}$ and $\pi_1(E)/i_1(\mathbb{Z}) \cong \mathbb{Z}$, where $i_1$ is injective. From the surjectivity of $k$ we then see that $\pi_1(E) \cong \mathbb{Z}^2$.
Thus (\ref{E:les_1}) gives
$$
0 \rightarrow \mathbb{Z} \rightarrow \pi_2(E \times_{\mathrm{T}^2} \sphere^3 \times \sphere^3) \rightarrow \mathbb{Z}^2 \xrightarrow{j} \mathbb{Z}^2 \rightarrow 0.
$$
By exactness, $\pi_2(E \times_{\mathrm{T}^2} \sphere^3 \times \sphere^3) \cong \mathbb{Z}$ since $j$ must be surjective and hence an isomorphism.

\textit{2.2.}Assume $l$ is injective, so we have $\pi_2(E) = 0$ and $\pi_2(E \times_{\mathrm{T}^2} \sphere^3 \times \sphere^3) \cong \ker{j}$.

\textit{2.2.1.} If $l$ is surjective we have $i_1 \equiv 0$ and it follows that $\pi_1(E) \cong \mathbb{Z}$ and $\ker j \cong \ker k \cong \mathbb{Z}$.

\textit{2.2.2.} If $l$ is not surjective we get $\ker i_1 = a\mathbb{Z} \subset \mathbb{Z}$ for some $a \geq 2$. Hence $\pi_1(E)$ has torsion and we conclude that $\pi_1(E) \cong \mathbb{Z} \oplus \mathbb{Z}_a$. Again we see that $\ker j \cong \ker k \cong \ker i_1 \cong \mathbb{Z}$.

\end{proof}

Having calculated the second integral homology group of $M^5$ the Main Theorem follows from the Barden-Smale classification of closed, simply-connected $5$-manifolds \cite{Ba1965,Sm1962}, as stated in the introduction.

\bibliographystyle{amsplain}

\begin{thebibliography}{10}

\bibitem{Ba1965}
D.~Barden, \emph{Simply connected five-manifolds}, Annals of Math. \textbf{82}
  (1965), no.~3, 365--385.

\bibitem{BB}L. Berard-Bergery, \emph{Les vari\'et\'es Riemanniennes homog\`enes simplement connexes de dimension impaire  
\'a courboure strictement positive}, J. Math. Pures Appl.  (9)  55  (1976), no. 1, 47--67.

\bibitem{Br1972}
G.~E. Bredon, \emph{Introduction to compact transformation groups}, Pure and
  Applied Mathematics, vol.~46, Academic Press, New York-London, 1972.

\bibitem{BBI}
D.~Burago, Yuri B., and S.~Ivanov, \emph{A course in metric geometry}, Graduate
  Studies in Mathematics, vol.~33, American Mathematical Society, Providence,
  RI, 2001.

\bibitem{ChGr1972}J.~Cheeger and D.~Gromoll, \emph{On the structure of complete manifolds of nonnegative curvature}, Ann. of Math. (2) 96 (1972), 413--443. 

\bibitem{Ga2}F.~Galaz-Garcia,  \emph{Nonnegatively curved fixed point homogeneous manifolds in low dimensions}, Geom. Dedicata, to appear. arXiv:0911.1254v1 [math.DG]


\bibitem{GaGaSe2010}
F.~Galaz-Garcia and C.~Searle, \emph{Nonnegatively curved $5$-manifolds with almost  maximal symmetry rank}, Preprint (2011) (see arXiv:0906.3870v1 [math.DG]).

\bibitem{Gr1993} 
K.~Grove, \emph{Critical point theory for distance functions}. Differential geometry: Riemannian geometry (Los Angeles, CA, 1990), 357--385, Proc. Sympos. Pure Math., 54, Part 3, Amer. Math. Soc., Providence, RI, 1993. 

\bibitem{Gr2002}
K.~Grove, \emph{Geometry of, and via, symmetries}, Conformal, Riemannian and Lagrangian geometry (Knoxville, TN, 2000), 31--53, Univ. Lecture Ser., 27, Amer. Math. Soc., Providence, RI, 2002.

\bibitem{GroveSearle1994} K.~Grove and C.~Searle, \emph{Positively curved manifolds with maximal symmetry-rank}, J. Pure Appl. Algebra  91  (1994), no. 1-3, 137--142.

\bibitem{GroveSearle1997}
 K.~Grove and C.~Searle, \emph{Differential topological restrictions curvature
  and symmetry}, J. Differential Geom. \textbf{47} (1997), no.~3, 530--559.

\bibitem{GWZ}
K.~Grove, B.~Wilking, and W.~Ziller, \emph{Positively curved cohomogeneity one
  manifolds and 3-{S}asakian geometry}, J. Differential Geom. \textbf{78}
  (2008), no.~1, 33--111. 

\bibitem{GZ1} K. Grove and W. Ziller, \emph{Curvature and symmetry of Milnor spheres}, Ann. of Math. (2),  \textbf{152}  (2000),  no. 1, 331--367. 


\bibitem{Ha1986}
R.~S. Hamilton, \emph{Four-manifolds with positive curvature operator}, J.
  Differential Geom. \textbf{24} (1986), no.~2, 153--179.

\bibitem{Ho2007}
C.~Hoelscher, \emph{Classification of cohomogeneity one manifolds in low
  dimensions}, Pacific J. Math. 246 (2010), no. 1, 129--185.

\bibitem{Ko2006}
J.~Koll\'ar, \emph{Circle actions on simply connected $5$-manifolds}, Topology
  \textbf{45} (2006), no.~3, 643--671.


\bibitem{ORa}
P.~Orlik and F.~Raymond, \emph{Actions of {${\rm SO}(2)$} on 3-manifolds},
  Proc. {C}onf. on {T}ransformation {G}roups ({N}ew {O}rleans, {L}a., 1967),
  Springer, New York, 1968, pp.~297--318.

\bibitem{Pe2002}
G.~Perelman, \emph{The entropy formula for the {R}icci flow and its geometric  applications}, Preprint (2002), arXiv:math/0211159v1 [math.DG].

\bibitem{Pe2003}
G.~Perelman, \emph{Ricci flow with surgery on three-manifolds}, Preprint (2003),  arXiv:math/0303109v1 [math.DG].

\bibitem{Ra}
F.~Raymond, \emph{Classification of the actions of the circle on
  {$3$}-manifolds}, Trans. Amer. Math. Soc. \textbf{131} (1968), 51--78.


\bibitem{S} C. Searle, \emph{Cohomogeneity and Positive Curvature in Low Dimensions}, Math. Z.,  \textbf{214}  (1993),  no. 3, 491--498; {\it Corrigendum}, Math. Z., \textbf{214} (1993), no. 3, 491--498.

\bibitem{SY} C. Searle and D. Yang, \emph{On the topology of nonnegatively curved simply-connected 4-manifolds with continuous symmetry}, Duke Math. J, vol. 74, no. 2, 547 -- 556 (1994).


\bibitem{Sm1962}
S. Smale, \emph{On the structure of {$5$}-manifolds}, Ann. of Math. (2)
  \textbf{75} (1962), 38--46.

\bibitem{Ve2002} L. Verdiani, \emph{Cohomogeneity one Riemannian manifolds of even dimension with strictly positive sectional curvature, I},  Math. Z, \textbf{241}  (2002), 329--229. 

\bibitem{Ve2004}
L.~Verdiani, \emph{Cohomogeneity one manifolds of even dimension with strictly
  positive sectional curvature}, J. Differential Geom. \textbf{68} (2004),
  no.~1, 31--72.


\bibitem{Wa}N.~Wallach, \emph{Compact homogeneous Riemannian manifolds with strictly positive curvature}, Ann. Math. 96 (1972), 277--295.

\bibitem{Wi_sym}
B.~Wilking, \emph{Positively curved manifolds with symmetry}, Ann. of Math. (2)
  \textbf{163} (2006), no.~2, 607--668.
  
\bibitem{Wkg2007}
B.~Wilking, \emph{Nonnegatively and positively curved manifolds}, Surveys in
  differential geometry. {V}ol. {XI}, Surv. Differ. Geom., vol.~11, Int. Press,
  Somerville, MA, 2007, pp.~25--62.

\end{thebibliography}


\end{document}